\numberwithin{equation}{section}
\newtheorem{theorem}{Theorem}[section]
\newtheorem{lemma}[theorem]{Lemma}
\theoremstyle{remark}
\newtheorem{example}[theorem]{Example}
\DeclareMathOperator{\dist}{dist}
\title{Uniform convergence of Green's functions}
\author{Sergei Kalmykov}
\address{School of mathematical sciences, Shanghai Jiao Tong University, 800 Dongchuan RD, Shanghai 200240, China}
\email{sergeykalmykov@inbox.ru}
\thanks{First author supported by NSFC grant 11650110426.}
\author{Leonid V. Kovalev}
\address{215 Carnegie, Mathematics Department, Syracuse University, Syracuse, NY 13244, USA}
\email{lvkovale@syr.edu}
\thanks{Second author supported by the National Science Foundation grant DMS-1362453.}
\subjclass[2010]{Primary 31A15; Secondary 31A05, 31A25}
\keywords{Green's function, kernel convergence, conformal map}
\begin{document}

\begin{abstract}
Given a sequence of regular planar domains converging in the sense of kernel,  we prove that the corresponding Green's functions converge uniformly on the complex sphere, provided the limit domain is also regular, and the connectivity is uniformly bounded.
\end{abstract}

\maketitle

\section{Introduction}

Convergence theorems for holomorphic maps, harmonic functions, and Green's functions usually concern their uniform convergence on compact subsets of a domain (e.g., ~\cite{Duren, Pommerenke}). However, their uniform convergence on all of the domain is also of interest. A classical result on the uniform convergence of normalized conformal maps $f_n\colon \mathbb D\to\mathbb C$ is a 1922 theorem of Rad\'o~\cite{Rado}, which related it to the Frech\'et distance between boundary curves. Rad\'o theorem has recently become a useful tool in the  theory of Schramm-Loewner evolution~\cite{CamiaNewman, CamiaNewman2}; it was also extended to quasiconformal maps~\cite{Palka}.

In this note we study the uniform convergence of Green's functions of domains on the complex sphere and in $\mathbb R^n$. Under the Dirichlet boundary condition, Green's function of a regular domain can be continuously extended by $0$ outside of the domain, which makes it possible to discuss uniform convergence on the entire space. We will prove that uniform convergence holds under the kernel convergence of planar domains, provided their connectivity is uniformly bounded (Theorem~\ref{multiplyconnected}). In higher dimensions, uniform convergence may fail even for domains homeomorphic to a ball (Example~\ref{examplespace}).

\section{Preliminaries}

All domains we consider are assumed to be regular for the Dirichlet boundary problem. These can be characterized as the domains $\Omega\subset \overline{\mathbb C}$ such that for every $w\in \Omega$ there exists \textit{Green's function} $g_\Omega(\cdot, w)$ such that
\begin{itemize}
\item $g_\Omega(\cdot, w)$ is continuous on $\overline{\mathbb C}\setminus \{w\}$ and is zero on $\overline{\mathbb C}\setminus \Omega$;
\item $g_\Omega(\cdot, w)$ is harmonic in $\Omega\setminus \{w\}$;
\item $g_\Omega(z,w) = -\log|z-w| + O(1)$ as $z\to w$ if $w$ is finite;
\item $g_\Omega(z,w)=\log|z|+O(1)$, as $z\rightarrow w$ if  $w=\infty$.
\end{itemize}

As for higher dimensions, the {\it Green's function} $g(\cdot,w)$, of a domain $\Omega\subset \mathbb{R}^n$,~$n>2$ with the pole at $w\in \Omega$ is defined by the following conditions.
\begin{itemize}
\item $g_\Omega(\cdot, w)$ is continuous on $\mathbb{R}^n \setminus \{w\}$ and is zero on ${\mathbb R}^n\setminus \Omega$;
\item $g_\Omega(\cdot, w)$ is harmonic in $\Omega\setminus \{w\}$;
\item $g_\Omega(z,w) = |z-w|^{2-n} + O(1)$ as $z\to w$.
\end{itemize}
We write $g=g_\Omega$ and $g_n=g_{\Omega_n}$.

Recall that monotone convergence of domains (that is, $\Omega_n\subset \Omega_{n+1}$ or $\Omega_n\supset \Omega_{n+1}$ for all $n$) implies pointwise convergence of their Green's functions; moreover this convergence is uniform on compact subsets of the limit domain~\cite[p. 94]{MR1814344}. Such convergence properties are relevant for the studies of the asymptotic behavior of capacities of planar sets and condensers~\cite{Dubinin}. A concept more general than  monotone convergence is kernel convergence, which we define next.

Let $w\in\mathbb{C}$ be given and let $\Omega_n\subset\overline{\mathbb C}$ be domains such that $w$ is an interior point of $\bigcap_{n=1}^\infty \Omega_n$.
Following~\cite[p. 13]{Pommerenke}, we say that
\[
\Omega_n \rightarrow \Omega \ \ \text{as} \ \ n\rightarrow \infty \ \ \text{with respect to } w
\]
in the sense of {\it kernel convergence} if
\begin{itemize}
\item $\Omega$ is a domain such that $w\in \Omega$ and some neighborhood of every $z\in \Omega$ lies in $\Omega_n$ for large $n$;
\item for $z\in \partial \Omega$ there exist $z_n\in \partial \Omega_n$ such that $z_n\rightarrow z$ as $n\rightarrow \infty$.
\end{itemize}
An equivalent definition is found in ~\cite[p.~77]{Duren} and \cite[p. 54]{MR0247039}. According to it, $\Omega$ is the kernel of $\{\Omega_n\}$ if it is the maximal domain containing $w$ such that every compact subset of $\Omega$ belongs to all but finitely many of the domains $\Omega_n$. The convergence to $\Omega$ in the sense of kernel requires that every subsequence of $\{\Omega_n\}$ also has the same kernel $\Omega$.

The definition of kernel convergence is also applicable to the convergence of domains in $\mathbb R^n$ for $n>2$.

\begin{lemma}\label{oneside} If  $\Omega_n\to \Omega$ in the sense of kernel, then  $\sup_{z\ne w} (g(z,w) - g_n(z,w)) \to 0$ for every $w\in\Omega$.
\end{lemma}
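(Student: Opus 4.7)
My plan is to apply the maximum principle to the difference $h_n(z) := g(z,w) - g_n(z,w)$, exploiting the cancellation of the logarithmic poles of the two Green's functions at $w$. By the defining properties, both $g(\cdot,w)$ and $g_n(\cdot,w)$ are continuous on $\overline{\mathbb{C}}\setminus\{w\}$ (extended by zero outside the respective domains) and their singularities at $w$ have the same form $-\log|z-w|+O(1)$ (or $\log|z|+O(1)$ if $w=\infty$). Consequently $h_n$ has a removable singularity at $w$ and extends to a bounded function on $\overline{\mathbb{C}}$ that is continuous everywhere and harmonic on $\Omega\cap\Omega_n$.

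I would then apply the maximum principle on each connected component of $\Omega\cap\Omega_n$. Since the boundary of any such component is contained in $\partial\Omega\cup\partial\Omega_n$, since $g=0$ on $\partial\Omega$ while $g_n\ge 0$ there (so $h_n\le 0$), and since $g_n=0$ on $\partial\Omega_n$ (so $h_n=g(\cdot,w)$ there), I obtain
\[
\sup_{\Omega\cap\Omega_n} h_n \;\le\; \sup_{\partial\Omega_n\cap\overline{\Omega}} g(\cdot,w) \;\le\; \sup_{\overline{\mathbb{C}}\setminus\Omega_n} g(\cdot,w).
\]
Off $\Omega_n$ the inequality $h_n\le g(\cdot,w)$ is immediate since $g_n\equiv 0$ there; thus the displayed right-hand side majorizes $\sup_{z\ne w} h_n(z)$.

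To finish, it suffices to show $\sup_{\overline{\mathbb{C}}\setminus\Omega_n} g(\cdot,w)\to 0$. Given $\epsilon>0$, set $K_\epsilon := \{z\in\overline{\mathbb{C}}: g(z,w)\ge\epsilon\}\cup\{w\}$. Continuity of $g(\cdot,w)$ on $\overline{\mathbb{C}}\setminus\{w\}$ together with $g(\cdot,w)\to\infty$ at $w$ makes $K_\epsilon$ closed in $\overline{\mathbb{C}}$, hence compact, and the vanishing of $g$ on $\overline{\mathbb{C}}\setminus\Omega$ places $K_\epsilon$ inside $\Omega$. By the kernel convergence hypothesis, every compact subset of $\Omega$ lies in $\Omega_n$ for all sufficiently large $n$, so $K_\epsilon\subset\Omega_n$ eventually; equivalently, $\overline{\mathbb{C}}\setminus\Omega_n\subset\{g(\cdot,w)<\epsilon\}$, and the supremum above is less than $\epsilon$.

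The only point demanding a little care is the legitimacy of the maximum principle on the components of $\Omega\cap\Omega_n$: these components may have very irregular boundary, but $h_n$ is bounded and extends continuously up to $\partial(\Omega\cap\Omega_n)$ thanks to the regularity of both $\Omega$ and $\Omega_n$, so the classical maximum principle applies componentwise and delivers the displayed bound. Everything else is a direct consequence of the definition of kernel convergence and of the level-set compactness furnished by the regularity of $\Omega$.
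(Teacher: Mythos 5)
Your proof is correct and rests on essentially the same mechanism as the paper's: the maximum principle applied to $g-g_n$ (with the poles at $w$ cancelling) combined with the observation that the compact level set $\{z\colon g(z,w)\ge\epsilon\}$ lies in $\Omega$ and hence, by kernel convergence, in $\Omega_n$ for all large $n$. The only difference is organizational: the paper applies the maximum principle directly on $U=\{g>\epsilon\}$, where the boundary value is exactly $\epsilon$, while you apply it on the components of $\Omega\cap\Omega_n$ and then separately verify that $\sup_{\overline{\mathbb{C}}\setminus\Omega_n}g(\cdot,w)\to 0$.
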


\begin{proof} Let $M_n=\sup_{z\ne w} (g(z,w) - g_n(z,w))$. Since $\Omega_n$ is a regular domain, we have $g_n(z,w)=0$ for $z\in \partial \Omega_n$. Hence $M_n\ge 0$.

Given $\epsilon>0$, let $U=\{z\in\Omega\colon g(z,w)>  \epsilon \}$. Since $\overline{U}$ is a compact subset of $\Omega$, there exists $N$ such that $\overline{U}\subset \Omega_n$ for all $n\ge N$. Applying the maximum principle to $g-g_n -\epsilon$ on the set $U$ yields $g-g_n\le \epsilon$ on $U$, since $g=\epsilon$ on $\partial U$. On the complement of $U$, the inequality $g-g_n\le \epsilon$ holds by virtue of $g \le \epsilon$. This proves $M_n\le \epsilon$.
\end{proof}

Finally, let us introduce the notation $D(a,r)$ for the open disk of radius $r$ and center $a$, and $C(a,r) = \partial D(a,r)$. As a special case, $\mathbb D=D(0,1)$.

\section{Simply connected domains}

In this section we consider the important special case of simply-connected domains in $\mathbb C$. The proof of this case is different from the proof of Theorem~\ref{multiplyconnected} concerning the multiply connected domains.

\begin{theorem}\label{simplyconnected} Suppose that simply-connected regular domains $\Omega_n\subset\mathbb C$
converge in the sense of kernel
to a regular simply-connected domain $\Omega$.
Then for every $w\in\Omega$ Green's functions $g_{n}(\cdot, w)$ converge to $g(\cdot, w)$ uniformly on $\mathbb C\setminus \{w\}$.
 \end{theorem}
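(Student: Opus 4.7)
\emph{Proof plan.}
The Green's function of any simply connected regular proper subdomain $U$ of $\mathbb C$ with pole at $w\in U$ is given by $-\log|\phi_U(z)|$, where $\phi_U\colon U\to \mathbb D$ is the Riemann map normalised by $\phi_U(w)=0$ and $\phi_U'(w)>0$. Applying this to $\Omega$ and to each $\Omega_n$, we write $g(z,w)=-\log|\phi(z)|$ and $g_n(z,w)=-\log|\phi_n(z)|$. By Carath\'eodory's kernel theorem \cite{Pommerenke}, the hypothesis $\Omega_n\to\Omega$ in the sense of kernel with respect to $w$ is equivalent to $\phi_n^{-1}\to \phi^{-1}$ locally uniformly on $\mathbb D$; consequently $\phi_n\to\phi$, and therefore $g_n\to g$, uniformly on every compact subset of $\Omega$ (the logarithmic singularity at $w$ causes no trouble, because it cancels in the difference $g_n-g$).

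Lemma~\ref{oneside} already delivers $\sup_z(g-g_n)\to 0$, so only the reverse one-sided estimate $\sup_z(g_n-g)\to 0$ remains to be proved. For each $t>0$ introduce the closed sub-level sets
\[
A_t=\{z\in\mathbb C\colon g(z,w)\ge t\}=\phi^{-1}\bigl(\overline{D(0,e^{-t})}\bigr), \qquad
A_t^{n}=\phi_n^{-1}\bigl(\overline{D(0,e^{-t})}\bigr).
\]
Each $A_t$ is a compact subset of $\Omega$; and since $\phi_n^{-1}\to \phi^{-1}$ uniformly on the compact set $\overline{D(0,e^{-t})}\subset\mathbb D$, the sub-level sets $A_t^n$ converge to $A_t$ in the Hausdorff metric.

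Suppose, towards a contradiction, that $\sup_z(g_n-g)$ does not tend to $0$. After passing to a subsequence we may select $\epsilon_0>0$ and points $z_n$ with $g_n(z_n,w)-g(z_n,w)>\epsilon_0$. Since $g\ge 0$, this forces $g_n(z_n,w)>\epsilon_0$, hence $z_n\in A_{\epsilon_0}^n$; the Hausdorff convergence just noted supplies a further subsequence with $z_n\to z^*\in A_{\epsilon_0}\subset \Omega$. Fix a compact neighbourhood $K\subset \Omega$ of $z^*$. By kernel convergence $K\subset \Omega_n$ for all large $n$, and by the local uniform convergence established in the first paragraph one has $g_n-g\to 0$ uniformly on $K$. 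Evaluating at $z_n\in K$ for $n$ large then yields $g_n(z_n,w)-g(z_n,w)\to 0$, contradicting the standing inequality.

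The decisive step is the identification of the sub-level sets of $g$ and $g_n$ with images of a single compact disk under the respective Riemann maps, combined with the Hausdorff convergence supplied by Carath\'eodory's theorem. This is where simple connectedness enters in an essential way, and it is precisely this conformal device that is unavailable in the multiply connected situation of Theorem~\ref{multiplyconnected}, which consequently demands a different approach.
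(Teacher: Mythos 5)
Your argument is correct, but it follows a genuinely different route from the paper. You represent the Green's functions via normalized Riemann maps, $g_n=-\log|\phi_n|$, invoke the Carath\'eodory kernel theorem to get $\phi_n^{-1}\to\phi^{-1}$ (and hence $\phi_n\to\phi$, $g_n-g\to0$) locally uniformly, and then observe that the superlevel sets $\{g_n\ge\epsilon_0\}=\phi_n^{-1}\bigl(\overline{D(0,e^{-\epsilon_0})}\bigr)$ converge in the Hausdorff metric to the compact set $\{g\ge\epsilon_0\}\subset\Omega$, so that any points where $g_n-g$ stays large must accumulate \emph{inside} $\Omega$, where locally uniform convergence gives the contradiction. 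The paper instead argues on the boundary: it uses the maximum principle on $\mathbb C\setminus(\partial\Omega_n\cup\partial\Omega)$ to place the bad points $z_n$ on $\partial\Omega$, Krzy\.z's circular symmetrization theorem to show $\{z_n\}$ is bounded, and the Koebe $1/4$- and distortion theorems to prove the quantitative estimate $g_n(z)\le\sqrt{128\,\dist(z,\partial\Omega_n)}$, which contradicts the kernel-convergence requirement that $\partial\Omega_n$ approaches $z\in\partial\Omega$. Your approach is shorter and avoids both the symmetrization result and the distortion estimates, at the price of leaning entirely on the Riemann map and the kernel theorem (so it is strictly confined to the simply connected case, as you note); the paper's approach yields an explicit quantitative relation between $g_n$ and the distance to $\partial\Omega_n$ and localizes the obstruction on $\partial\Omega$, which is closer in spirit to the argument later used for multiply connected domains. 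Two small points you should make explicit when writing this up: cite the statement of the kernel theorem that includes locally uniform convergence of the inverse maps $\phi_n\to\phi$ on $\Omega$ (it is part of the standard formulation in Pommerenke/Duren, not merely a formal consequence), and spell out the removable-singularity/maximum-modulus argument showing $g_n-g=\log|\phi/\phi_n|\to0$ uniformly on a neighborhood of $w$, since your limit point $z^*$ could a priori be $w$ itself; both are routine but belong in a complete proof.
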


\begin{proof} Suppose the uniform convergence fails. Using Lemma~\ref{oneside} and passing to a subsequence, we may assume there exist $\epsilon>0$ such that $\sup_{\mathbb C\setminus \{w\}}(g_n-g)\ge \epsilon$ for all $n$. The function $g_n-g$ is bounded and harmonic on the set $U = \mathbb C\setminus (\partial \Omega_n\cup\partial \Omega )$  after the removable singularity at $w$ is eliminated. Since $g_n-g \le 0$ on $\partial \Omega_n$, it follows from the maximum principle that $g_n-g$ attains its maximum on $\partial\Omega$.

Pick $z_n\in\partial\Omega$ such that $g_n(z_n)\ge  \epsilon$.

We claim that the sequence $\{z_n\}$ is bounded. Indeed, the distances $\dist(w,\partial\Omega_n)$ must be bounded by some constant $d$, for otherwise the limit domain $\Omega$ would be $\mathbb C$, which does not have Green's function. Let $h$ be Green's function of the domain $\mathbb C\setminus (-\infty, -d]$ with the pole at $0$. A theorem of Krzy\.z~\cite{Krzyz} (see also~\cite{Baernstein}) asserts that Green's function increases under circular symmetrization of the domain, which implies
$g_n(z_n) \le h(|z_n-w|)$. Since $h(t)\to 0$ as $t\to\infty$, the fact that $g_n(z_n)\ge \epsilon$  implies that $\{z_n\}$ is a bounded sequence.

By compactness, passing to a subsequence, we may assume that $z_n\to z\in\mathbb C$. Let $\phi_n\colon \mathbb D\to \Omega_n$ be a conformal map such that $\phi_n(0)=w$. For every $\zeta\in \mathbb D$ the Koebe $1/4$-theorem and the distortion theorem yield
\begin{equation}\label{Koebe14}
\dist(\phi_n(\zeta), \partial\Omega_n) \ge
\frac14 |\phi_n'(\zeta)|(1-|\zeta|) \ge \frac{(1-|\zeta|)^2}{4(1+|\zeta|)^3}
\ge \frac{(1-|\zeta|)^2}{32}
\end{equation}
Note also that $g_n(\phi_n(\zeta)) = -\log|\zeta| $.

 When $1-|\zeta|\le 1/2$, we have $-\log|\zeta |\le 2(1-|\zeta|)$.
If a point $z=\phi_n(\zeta)$ satisfies $\dist(z, \partial\Omega_n) \le 1/128$, then~\eqref{Koebe14} implies $1-|\zeta|\le \sqrt{32 \dist(z,\partial\Omega_n)}\le 1/2$, hence
\[
g_n(z) = -\log|\zeta| \le 2(1-|\zeta|) \le \sqrt{128 \dist(z, \partial\Omega_n)}
\]

Therefore, if $z_n\to z$ is a sequence such that $g_n(z)\ge \epsilon$ for all $n$, then $\dist(z,\partial \Omega_n) \ge c>0$ for all sufficiently large $n$. This contradicts the definition of kernel convergence, since  $z\in\partial\Omega$.
\end{proof}

\section{Multiply connected domains}

Here we consider multiply connected domains on the Riemann sphere $\overline{\mathbb{C}}$, subject to a uniform upper bound on their connectivity.

\begin{theorem}\label{multiplyconnected} Suppose that regular domains $\Omega_n\subset \overline{\mathbb C}$ converge in the sense of kernel to a regular domain $\Omega$. If there exists a constant $N$ such that each domain $\Omega_n$ has at most $N$ boundary components, then for every $w\in\Omega$ Green's functions $g_{n}(\cdot, w)$ converge to $g(\cdot, w)$ uniformly on $\mathbb C\setminus \{w\}$.
\end{theorem}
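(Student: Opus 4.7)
The plan is to run the same contradiction scheme as in Theorem~\ref{simplyconnected}, with Theorem~\ref{simplyconnected} itself playing the role of Koebe's theorem. By Lemma~\ref{oneside} it suffices to prove $\sup_{z\ne w}(g_n(z,w)-g(z,w))\to 0$. Assuming this fails, the maximum principle applied to $g_n-g$ --- harmonic off $\partial\Omega_n\cup\partial\Omega\cup\{w\}$ once the cancelling singularities at $w$ are removed, $\le 0$ on $\partial\Omega_n$, and $\ge 0$ on $\partial\Omega$ --- forces the supremum to be attained on $\partial\Omega$, producing $z_n\in\partial\Omega\cap\Omega_n$ with $g_n(z_n,w)\ge\epsilon$. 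Compactness of $\partial\Omega$ in $\overline{\mathbb C}$ lets us pass to a subsequence with $z_n\to z\in\partial\Omega$.

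The essential new step exploits the bound on connectivity. Passing to a further subsequence, assume that each $\Omega_n$ has a constant number $k\le N$ of boundary components $K_1^n,\dots,K_k^n$, and that each sequence $(K_j^n)_n$ converges in the Hausdorff metric on $\overline{\mathbb C}$ to a continuum $K_j$. A preliminary claim is that some index $j_0$ can be chosen with $z\in K_{j_0}$ \emph{and} $K_{j_0}$ nondegenerate. Otherwise every $K_j$ containing $z$ would equal $\{z\}$; because only finitely many components are present and those not containing $z$ lie at a positive distance from $z$, a small punctured disk around $z$ would then be eventually contained in $\Omega_n$, making $z$ an isolated boundary point of the regular domain $\Omega$ --- a contradiction.

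With such $j_0$ in hand, let $V_n$ (resp.\ $V$) be the connected component of $\overline{\mathbb C}\setminus K_{j_0}^n$ (resp.\ of $\overline{\mathbb C}\setminus K_{j_0}$) containing $w$. Both are simply-connected regular domains, and the Hausdorff convergence $K_{j_0}^n\to K_{j_0}$ translates into kernel convergence $V_n\to V$ with respect to $w$. Monotonicity of Green's functions in the domain yields $g_n(\cdot,w)\le g_{V_n}(\cdot,w)$ (since $\Omega_n\subseteq V_n$), and Theorem~\ref{simplyconnected} gives $g_{V_n}(\cdot,w)\to g_V(\cdot,w)$ uniformly on $\mathbb C\setminus\{w\}$ (a M\"obius change of coordinates handles the case when $V$ contains $\infty$). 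Since $z\in K_{j_0}$ lies outside $V$, the continuous extension of $g_V$ by zero satisfies $g_V(z,w)=0$, so $g_n(z_n,w)\le g_{V_n}(z_n,w)\to 0$, contradicting $g_n(z_n,w)\ge\epsilon$. The delicate step is the nondegeneracy claim: it uses all three hypotheses --- Hausdorff extraction of boundary components, the bound $N$ on connectivity, and regularity of $\Omega$ --- and requires care when several tiny boundary components simultaneously collapse toward $z$.
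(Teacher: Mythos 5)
Your route is genuinely different from the paper's, and in outline it can be made to work: the paper never extracts Hausdorff limits of boundary components or reduces to the simply connected case; it instead combines a quantitative decay estimate (Lemma~\ref{Greenbound}, from Totik's theorem: if one complement continuum meets both $C(z_n,\rho)$ and $C(z_n,\delta\rho)$, then $g_n(z_n,w)<\epsilon$) with a counting argument --- at most $N$ complement components can obstruct at most $3N$ members of a family of concentric annuli around $z_0$, so an entire punctured disk around $z_0$ lies in $\Omega$ and $z_0$ would be an isolated, hence irregular, boundary point. The genuine gap in your version is the step ``Theorem~\ref{simplyconnected} gives $g_{V_n}(\cdot,w)\to g_V(\cdot,w)$ uniformly (a M\"obius change of coordinates handles the case when $V$ contains $\infty$)''. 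Theorem~\ref{simplyconnected} is stated for domains in $\mathbb C$ with a \emph{fixed} pole, whereas your $V_n$ are components of $\overline{\mathbb C}\setminus K_{j_0}^n$ and may all contain $\infty$; worse, a \emph{single} M\"obius map making every $V_n$ (and $V$) a plane domain need not exist, because there may be no point lying outside all $V_n$: if, say, $K_{j_0}^n$ is a segment translated by $i/n$, then $\overline{\mathbb C}\setminus V_n=K_{j_0}^n$ and every fixed point of the sphere lies in $V_n$ for all large $n$ (such boundary components are perfectly compatible with the hypotheses of Theorem~\ref{multiplyconnected}). To repair this you must either use $n$-dependent normalizations $T_n\to T$ (sending a point $q_n\in K_{j_0}^n$, chosen at definite spherical distance from $z$ and $w$, to $\infty$) and then rerun the Koebe-type boundary-decay estimate from the \emph{proof} of Theorem~\ref{simplyconnected} uniformly in $n$ (the statement no longer applies verbatim, since the poles $T_n(w)$ move), or prove a spherical version of Theorem~\ref{simplyconnected}, or --- simplest --- apply Lemma~\ref{Greenbound} directly to $K_{j_0}^n$: it has diameter bounded below and comes within $o(1)$ of $z_n$, so it meets $C(z_n,\rho)$ and $C(z_n,\delta\rho)$ for a fixed small $\rho$, giving $g_n(z_n,w)<\epsilon$ without any monotonicity or kernel convergence of the $V_n$.

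Two further steps are asserted rather than proved, though both are fixable. First, in the nondegeneracy dichotomy, it is not true that ``a small punctured disk around $z$ would be eventually contained in $\Omega_n$'': the collapsing components $K_j^n$ are nonempty subsets of $D(z,\epsilon_n)\setminus\{z\}$, so the punctured disk meets $\partial\Omega_n$ for every $n$. What is true is that every \emph{compact} subset of the punctured disk eventually lies in $\Omega_n$ (the annuli $\{\epsilon_n<|\zeta-z|<r\}$ lie in $\Omega_n$, since a path in $\Omega_n$ from $z_n$ to $w$ crosses them), and then the maximality in the definition of kernel convergence puts the punctured disk inside $\Omega$, so $z$ is an isolated boundary point of $\Omega$; with this correction the conclusion stands. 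Second, the implication ``Hausdorff convergence $K_{j_0}^n\to K_{j_0}$ translates into kernel convergence $V_n\to V$'' needs an argument: the compact-subset condition uses a compact connected set joining $w$ to the given compact inside $V$; the boundary-approximation condition needs the observation that a segment from a point of $V$ near $\zeta\in\partial V$ to a nearby point of $K_{j_0}^n$ must cross $\partial V_n$; maximality of the kernel again uses the Hausdorff convergence; and one must check $w\notin K_{j_0}$ (via $\dist(w,\partial\Omega_n)\ge c$, itself a consequence of kernel convergence) and that $V$, $V_n$ are regular (true because $K_{j_0}$ and, for large $n$, $K_{j_0}^n$ are nondegenerate continua). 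With these points supplied, your argument would constitute a valid alternative proof, at the price of being considerably longer than the paper's annulus-counting argument.
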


\begin{lemma}\label{Greenbound} For every $\epsilon>0$ there exists $\delta \in (0,1)$ with the following property. If $\Omega\subset \overline{\mathbb C}$ is a domain containing distinct finite points $z, w$, and there is $\rho\le |z-w|/2$ such that some connected component of $\mathbb C\setminus \Omega$ meets both $C(z,\rho)$ and $C(z,\delta \rho)$, then $g_\Omega(z,w)<\epsilon$.
\end{lemma}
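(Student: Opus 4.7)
The plan is to apply monotonicity of Green's functions to reduce to a simply connected sphere domain complementary to a continuum, and then use the Koebe $1/4$-theorem and distortion theorem to force the image of a small disk under the uniformizing map to lie inside $D(z,2\rho)$. Let $K$ denote the connected component of $\mathbb C\setminus\Omega$ guaranteed by the hypothesis (together with $\infty$, if it is unbounded, so as to make $K$ a continuum on the sphere). Because $\Omega$ is connected and avoids $K$, $\Omega$ is contained in a single connected component $U$ of $\overline{\mathbb C}\setminus K$; since $K$ is a sphere continuum, $U$ is simply connected, and the monotonicity property of Green's functions gives $g_\Omega(z,w)\le g_U(z,w)$.

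I would then take the conformal uniformization $\phi\colon\mathbb D\to U$ with $\phi(0)=z$, so that $g_U(z,w)=-\log|\phi^{-1}(w)|$. Since $K\subset\partial U$ contains a point on $C(z,\delta\rho)$, $\dist(z,\partial U)\le\delta\rho$, and Koebe's $1/4$-theorem yields $|\phi'(0)|\le 4\delta\rho$. Koebe's distortion theorem then gives
\[
|\phi(\zeta)-z|\le|\phi'(0)|\,\frac{|\zeta|}{(1-|\zeta|)^2}\le\frac{4\delta\rho\,e^{-\epsilon}}{(1-e^{-\epsilon})^2}\qquad(|\zeta|\le e^{-\epsilon}),
\]
which is strictly less than $2\rho$ as soon as $\delta<(1-e^{-\epsilon})^2/(2e^{-\epsilon})$. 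Under that choice of $\delta$, $\phi(\mathbb D_{e^{-\epsilon}})\subset D(z,2\rho)$; hence every $w$ with $|w-z|\ge 2\rho$ satisfies $|\phi^{-1}(w)|\ge e^{-\epsilon}$, and therefore $g_\Omega(z,w)\le g_U(z,w)\le\epsilon$.

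The hard part is that Koebe's distortion theorem as used above implicitly assumes $\phi$ takes values in $\mathbb C$, whereas $U$ may contain $\infty$; in that case $\phi$ has a pole at some $\zeta_\infty\in\mathbb D$ and the displayed bound is valid only on disks $\mathbb D_r$ with $r<|\zeta_\infty|$. I would get around this by proving an auxiliary estimate $g_U(z,\infty)\to 0$ as $\delta\to 0$, so that $|\zeta_\infty|=e^{-g_U(z,\infty)}$ stays close to $1$. That estimate follows from the Krzy\.z circular symmetrization theorem (the same result invoked in the proof of Theorem~\ref{simplyconnected}): after translating so that $z=0$ and symmetrizing $U$ about a ray from $z$, the continuum $K$ meets every circle $C(0,r)$ with $r\in[\delta\rho,\rho]$ (by the intermediate value theorem applied to $|\cdot|$ on $K$), and with the standard convention that arcs of zero angular measure reduce to their centers, the symmetrized complement $K^*$ contains the radial segment $L=[-\rho,-\delta\rho]$. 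Monotonicity then gives $g_U(0,\infty)\le g_{\overline{\mathbb C}\setminus L}(0,\infty)$, and an explicit computation using the inverse Joukowski map $\zeta\mapsto\zeta+\sqrt{\zeta^2-1}$ shows the right side is $O(\sqrt{\delta})$. With $|\zeta_\infty|$ safely bounded away from $0$, the rescaled Koebe distortion estimate on $\mathbb D_{|\zeta_\infty|}$ yields the same conclusion after a further reduction in $\delta$.
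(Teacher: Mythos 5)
Your argument is correct, but it takes a genuinely different route from the paper. The paper disposes of the lemma in a few lines: it applies a M\"obius transformation sending $z\to 0$ and $w\to\infty$, rescales so that the image $E$ of the offending complementary continuum joins $C(0,r)$ (with $r\sim\delta$) to $C(0,1)$, and then quotes Theorem~2.2 of Totik to get $g_{\mathbb C\setminus E}(0,\infty)\le C\sqrt r$; monotonicity finishes the proof and yields the quantitative rate $\epsilon\sim\sqrt\delta$. Moving the pole to $\infty$ first is exactly what lets the paper avoid the complication you spend your last paragraph on: once the pole is at $\infty$ there is no uniformizing map whose pole sits inside the disk of interest. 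Your proof is instead self-contained: monotonicity down to the simply connected complementary component $U$ of the continuum, then Koebe $1/4$ plus the growth theorem for $\phi\colon\mathbb D\to U$, with the case $\infty\in U$ handled by a symmetrization estimate $g_U(z,\infty)=O(\sqrt\delta)$ (your Joukowski computation for the complement of $[-\rho,-\delta\rho]$ is right, and the rescaled distortion argument on $D(0,|\zeta_\infty|)$ does close the gap, since $|\zeta_\infty|\ge e^{-C\sqrt\delta}$ can be forced above $e^{-\epsilon/2}$). Two small repairs: the inclusion you invoke is backwards --- one has $\partial U\subset K$, not $K\subset\partial U$ --- but the estimate $\dist(z,\partial U)\le\delta\rho$ you need still holds because $z\in U$ while the point of $K$ on $C(z,\delta\rho)$ is not in $U$, so the segment joining them meets $\partial U$; and your conclusion gives $g_\Omega(z,w)\le\epsilon$ rather than the strict inequality, which is fixed by running the argument with $\epsilon/2$. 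In exchange for being longer, your proof uses only classical tools (Riemann mapping, Koebe, Krzy\.z symmetrization) already present elsewhere in the paper, whereas the paper's proof is shorter and delivers an explicit $\sqrt\delta$ bound by outsourcing the work to Totik's estimate.
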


\begin{proof} This is a consequence of Theorem 2.2 in~\cite{Totik}. Indeed, let $\Gamma$ be a component of $\mathbb C\setminus \Omega$ mentioned in the statement.
After applying a M\"obius transformation that sends $z$ to $0$ and $w$ to $\infty$ and rescaling the resulting domain, $\Gamma$ is transformed to a compact connected set $E$ that meets both the unit circle $C(0,1)$ and the circle $C(0, r)$ with $r\sim \delta$. In particular, we have $r\le 1/2$ when $\delta$ is small enough. In this setting, Theorem 2.2~\cite{Totik} implies that $g_{\mathbb C\setminus E}(0,\infty) \le C\sqrt{r}$ with an absolute constant $C$. This proves the lemma, since $r$ can be made arbitrarily small by choosing $\delta$ small.
\end{proof}

\begin{proof}[Proof of Theorem~\ref{multiplyconnected}]
Suppose that the uniform convergence fails. As in the proof of Theorem~\ref{simplyconnected}, we have $\epsilon>0$ and a sequence $z_n\in\partial\Omega$ such that $g_n(z_n)\ge  \epsilon$. Passing to a subsequence, we get $z_n\to z_0\in \partial \Omega$, the convergence being with respect to the spherical metric on $\overline{\mathbb C}$. Applying a M\"obius transformation, we can make sure that $z_0$ and $w$ are finite points in $\mathbb C$.

Let $d=|z_0-w|$. Pick a small $\lambda\in (0,1)$ and consider the concentric annuli
\[
A_j = \{z \colon d \lambda^{j+2} < |z-z_0|< d\lambda^{j}\}, \quad j\in\mathbb N,
\]
as well as their counterparts centered at $z_n$,
\[
A_j(n) = \{z \colon d \lambda^{j+2} < |z-z_n|< d\lambda^{j}\}, \quad j\in\mathbb N.
\]
Let $J$ be the set of indices $j\in\mathbb N$ such that $A_j$ is not contained in $\Omega$.

\textbf{Claim 1}: The set $J$ has at most $3N$ elements, where $N$ is as in the statement of the theorem.

Suppose to the contrary that $J$ contains a subset $J'$ of cardinality $3N+1$. By Lemma~\ref{Greenbound} no connected component of $\Omega_n^c$ meets more than one of the circles $\{z \colon |z-z_n| =  d\lambda^{j}\}$,
provided $\lambda$ is small enough. It follows that a component of $\Omega_n^c$ can intersect at most three of the annuli $A_j(n)$. Therefore, for each $n$ there exists $j_n\in J'$ such that  $A_{j_n}(n)\subset \Omega_n$. Since $J'$ is finite, there exists a subsequence of the domains $\Omega_n$ for which $j_n$ is the same index $j'\in J'$.

We claim $A_{j'}\subset \Omega$. Indeed, otherwise there is a compact subset $K\subset A_{j'}$ which is not contained in $\Omega$. But  $K\subset A_{j'}(n)$ for all sufficiently large $n$, which implies $K\subset \Omega_n$ for all such $n$. By the definition of kernel convergence, $K\subset \Omega$ and thus $A_{j'}\subset\Omega$. Since the containment $A_{j'}\subset\Omega$ contradicts  the definition of $J$, Claim~1 is proved.

Since $J$ is finite, $z_0$ is an isolated boundary point of $\Omega$, a contradiction to the regularity of domain $\Omega$.
\end{proof}

\section{Counterexamples}

The first example is standard; it demonstrates that without assuming the regularity of limit domain one cannot hope for uniform convergence even if the domains are nested.

\begin{example}\label{nonregularlimit}
The regular domains $\Omega_n= \{z : 1/n < |z|< 1\}$ converge in the  sense of kernel to the punctured unit disk $\Omega = \mathbb D\setminus\{0\}$. Since Green's function of $\Omega$ is the same as the one of $\mathbb D$, the convergence $g_n(z, 1/2)\to g(z,1/2)$ fails to be uniform in $\Omega$.
\end{example}

Our second example shows that a uniform bound on the number of boundary components in Theorem~\ref{multiplyconnected} cannot be omitted.

\begin{example}\label{exampleplane}
There exists a sequence of regular domains $\Omega_n\subset\mathbb C$ that converge in the sense of kernel to the unit disk $\mathbb D$ but their Green functions do not converge to $g_{\mathbb D}$ even pointwise.
\end{example}

\begin{proof}
To construct $\Omega_n$, let $A_n$ be a finite $(1/n)$-net in the set $D(0,2) \setminus \mathbb D$. The domain $D(0,2)\setminus A_n$ is not regular, and its Green's function is the same as of $D(0,2)$. For sufficiently small $r>0$ the closed $r$-neighborhood of $A_n$ consists of $n$ disjoint disks contained in $D(0,2)\setminus \overline{\mathbb D}$. Removing these disks from $D(0,2)$ we obtain a regular domain $\Omega(n,r)$.

As $r\to 0$, Green's function of $\Omega(n,r)$ converges pointwise to Green's function of $D(0,2)$. Choose $r$ small enough so that
\[g_{\Omega(n,r)}(1/2, 0) \ge g_{D(0,2)}(1/2, 0) - \frac1n\]
and let this domain be $\Omega_n$.

By construction, $\Omega_n\to\mathbb D$ in the sense of kernel convergence. However,
\[g_{\Omega_n}(1/2, 0) \to g_{D(0,2)}(1/2, 0) > g_{\mathbb D}(1/2,0)  \]
as claimed.
\end{proof}

The number of boundary components of $\Omega_n$ in the above example grows indefinitely, as it must according to Theorem~\ref{multiplyconnected}. However, in  higher dimensions such examples can be constructed with domains homeomorphic to a ball.

\begin{example}\label{examplespace}
For $d\ge 3$, there exists a sequence of regular domains $\Omega_n\subset\mathbb R^d$ that converge in the sense of kernel to the unit ball $\mathbb B$ but their Green functions do not converge to $g_{\mathbb B}$ even pointwise. Furthermore, each $\Omega_n$ can be chosen to be homeomorphic to $\mathbb B$.
\end{example}

\begin{proof} Let $A_n$ be a finite $(1/n)$-net in the set $B(0,2) \setminus \mathbb B$. Pick a simple $C^\infty$-smooth curve $\Gamma$ that passes through every point of $A_n$ and is contained in $B(0,2) \setminus \mathbb B$ except for one endpoint, which lies on $\partial B(0,2)$. Removing $\Gamma$ from $B(0,2)$ does not affect Green's function since the curve has codimension at least $2$ and is therefore a polar set (Theorems 5.14 and 5.18 in ~\cite{HaymanKennedy}).

For sufficiently small $r>0$ the closed $r$-neighborhood of $\Gamma$ is a tubular neighborhood with no self-intersections. Removing it from $B(0,2)$ disks we obtain a regular domain $\Omega(n,r)$ that is homeomorphic to $B(0,2)$. As in Example~\ref{exampleplane}, $r$ can be made small enough so that as $n\to\infty$, Green's function converges to $g_{B(0,2)}$ on compact subsets of $\mathbb B$, rather than to $g_{\mathbb B}$. In the sense of kernel, the domains converge to $\mathbb B$, however.
\end{proof}

\bibliographystyle{amsplain}
\bibliography{amsrefs}

\end{document}